\newcommand{\mc}{\mathcal}
\newcommand{\R}{\mathbb{R}}\newcommand{\Rn}{\R^n}
\renewcommand{\O}{\Omega}
\renewcommand{\l}{\lambda}
\renewcommand{\L}{\Lambda}
\newcommand{\sol}{\mathcal{S}}
\DeclareMathOperator{\cl}{\mathcal{L}}
\def\XXint#1#2#3{{\setbox0=\hbox{$#1{#2#3}{\int}$}
\vcenter{\hbox{$#2#3$}}\kern-.5\wd0}}
\title{A simple characterization of $H$-convergence for a class of nonlocal problems}
\author{Jos\'e C. Bellido\thanks{E.T.S.I. Industriales,
Department of Mathematics,
University of Castilla-La Mancha,
E--13071 Ciudad Real, Spain.
\texttt{josecarlos.bellido@uclm.es}}
\and
Anton Evgrafov\thanks{Department of Mechanical Engineering,
Technical University of Denmark,
DK--2800 Kgs. Lyngby, Denmark.
\texttt{aaev@mek.dtu.dk}}
}
\begin{document}

\maketitle

\begin{abstract}
This is a follow-up of a paper by Fern\'andez-Bonder-Ritorto-Salort \cite{bonder2017}, where the classical concept of $H$-convergence was extended to fractional \(p\)-Laplace type operators. In this short paper we provide an explicit characterization of this notion by demonstrating that the weak-\(*\) convergence of the coefficients is an equivalent condition for $H$-convergence of the sequence of nonlocal operators. This result takes advantage of nonlocality and is in stark contrast to the local \(p\)-Laplacian case.
\end{abstract}

\begin{keywords} Nonlocal $H$-convergence, fractional elliptic equations
\end{keywords}

\begin{AMS}
35R11, 35B27
\end{AMS}

\pagestyle{myheadings}
\thispagestyle{plain}
\markboth{J. C. Bellido and A. Evgrafov}{A sufficient condition for $H$-convergence in nonlocal problems}

\section{Introduction}

For $p\in (1,\infty)$, $s\in(0,1)$ and a \textit{nonlocal} conductivity $a(x,y)$ belonging to the class
\[\mc{A}_{\lambda,\Lambda}=\left\{ a\in L^\infty(\Rn\times \Rn)\;:\; a(x,y)=a(y,x),\, \lambda\le a(x,y)\le \Lambda, \, \text{a.e.\ in \(\Rn\times \Rn\)}\right\},\]
where $0<\l\le \L<\infty$ are given constants, let us consider the following nonlocal operator related to the fractional \(p\)-Laplacian:
\[\cl_a u(x)=p.v. \int_{\Rn}a(x,y)\frac{|u(x)-u(y)|^{p-2}(u(x)-u(y))}{|x-y|^{n+sp}}\,dy.\]
For a fixed bounded domain $\O\subset \Rn$ with Lipschitz boundary and $f\in L^{p'}(\O)$, with
$p'=(p-1)/p$ being the conjugate exponent to $p$, we consider the nonlocal problem
\begin{equation}\label{eq:vp}
\left\{ \begin{array}{ll} \cl_{a} u=f & \mbox{in }\O,\\
u=0& \mbox{in }\R^n\backslash \O.  \end{array}\right.
\end{equation}
This problem is well-posed, with a unique solution found in the space
\[ W_0^{s,p}(\O)=\left\{u\in W^{s,p}(\Rn)\;:\; u=0 \mbox{ a.e. in }\Rn\backslash \O\right\},\]
where $W^{s,p}(\R^n)$ is the classical fractional Sobolev space over \(\Rn\), see~\cite{DiPaVa12,AdFo03}:
\[W^{s,p}(\Rn)=\left\{ u\in L^p(\Rn)\;:\; D_{s,p}u(x,y) \in L^p(\Rn\times \Rn)\right\},\]
and
\begin{equation}\label{eq:nlocgrad}D_{s,p}u(x,y)= \frac{u(x)-u(y)}{|x-y|^{\frac{n}{p}+s}}\end{equation}
is the $(s,p)$-nonlocal gradient of $u$.
In view of existence and uniqueness of solutions we can employ the shorthand notation \((u,q) = \sol_{a}f\) to denote the solution \(u \in W^{s,p}_0(\O)\) to~\eqref{eq:vp} corresponding
to coefficients \(a\) and the right hand side \(f\), and the \emph{non-local flux}
\(q=a|D_{s,p}u|^{p-2} D_{s,p} u \in L^{p'}(\O\times\O)\).

In this paper we are concerned with \(H\)-convergence of the nonlocal operators $\cl_{a_k}$ for a given sequence of coefficients $a_k$.
\begin{definition}\label{de:1}
  Given the sequence of coefficients $\{ a_k \}_{k=1}^\infty \subset \mc{A}_{\l,\L}$,
  we say that $\cl_{a_k}$ $H$-converges to $\cl_a$ if for any \(f\in W^{-s,p}(\O)\) (the dual space of $W^{s,p}_0(\O)$)
  the following conditions are satisfied:
\begin{enumerate}
\item convergence of states: $u_k \rightharpoonup u$, weakly in $W_0^{s,p}(\O)$;
\item convergence of non-local fluxes: $q_k\rightharpoonup q$, weakly in $L^{p'}(\O\times\O)$,
\end{enumerate}
where \((u_k,q_k)=\sol_{a_k}f\), and \((u,q)=\sol_{a}f\).
\end{definition}

In~\cite{bonder2017} is was shown that for any sequence $\{a_k\}_{k=1}^\infty \subset\mc{A}_{\lambda,\Lambda}$ there exists a subsequence $\{a_{k'}\}$ and a function $a\in \mc{A}_{\l,\frac{\L^{p'}}{\l}}$ such that $\cl_{a_{k'}}$ $H$-converges to $\cl_{a}$.
We show that in fact a \(a\) belongs to the same class
\(\mc{A}_{\lambda,\L}\).
Furthermore, our main result establishes that weak-\(*\) convergence of the sequence of coefficients is a necessary and sufficient condition for $H$-convergence in the considered case.

\begin{theorem}\label{th:sufficient} If $a_k, a\in \mc{A}_{\lambda,\Lambda}$, then $a_k\rightharpoonup a$ weakly-\(*\) in $L^\infty(\Rn\times \Rn)$ if and only if $\cl_{a_k}$ $H$-converges to $\cl_{a}$.
\end{theorem}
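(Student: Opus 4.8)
The plan is to establish the substantive implication---that weak-$*$ convergence of the coefficients forces $H$-convergence---and then to deduce the converse from it by a compactness-and-uniqueness argument. So assume $a_k\weakc a$ weakly-$*$ in $L^\infty(\Rn\times\Rn)$, fix $f\in W^{-s,p}(\O)$, and write $(u_k,q_k)=\sol_{a_k}f$, so that $u_k\in W_0^{s,p}(\O)$ solves
\[
\iint_{\Rn\times\Rn}a_k\,|D_{s,p}u_k|^{p-2}D_{s,p}u_k\;D_{s,p}v\;\dd x\,\dd y=\langle f,v\rangle\qquad\text{for all }v\in W_0^{s,p}(\O),
\]
and $q_k=a_k|D_{s,p}u_k|^{p-2}D_{s,p}u_k$. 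Testing with $v=u_k$ and using $a_k\ge\l$ together with the fractional Poincar\'e inequality on $W_0^{s,p}(\O)$ bounds $\|u_k\|_{W_0^{s,p}(\O)}$ uniformly, hence also $\|q_k\|_{L^{p'}}$ since $|q_k|\le\L\,|D_{s,p}u_k|^{p-1}$ and $(p-1)p'=p$. Along a subsequence, $u_k\weakc\bar u$ in $W_0^{s,p}(\O)$ and $q_k\weakc\bar q$ in $L^{p'}$; it remains to show that $(\bar u,\bar q)=\sol_af$.

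The crux is a compactness step that relies on nonlocality. The compact embedding of $W_0^{s,p}(\O)$ into $L^p(\O)$ gives $u_k\to\bar u$ in $L^p(\Rn)$, hence $u_k\to\bar u$ a.e.\ along a further subsequence, and therefore $D_{s,p}u_k\to D_{s,p}\bar u$ a.e.\ in $\Rn\times\Rn$: the nonlocal gradient~\eqref{eq:nlocgrad} is a pointwise difference quotient of $u_k$, so it inherits the a.e.\ convergence, in contrast with $\nabla u_k$ in the local $p$-Laplacian case. Consequently $g_k:=|D_{s,p}u_k|^{p-2}D_{s,p}u_k\to g:=|D_{s,p}\bar u|^{p-2}D_{s,p}\bar u$ a.e., while $\{g_k\}$ stays bounded in $L^{p'}$. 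I then claim $q_k=a_kg_k\weakc ag$ weakly in $L^{p'}$: for any fixed $h\in L^p$, H\"older's inequality shows $\{g_kh\}$ is uniformly integrable and tight in $L^1(\Rn\times\Rn)$, so Vitali's convergence theorem gives $g_kh\to gh$ in $L^1$; writing $\iint a_kg_kh-\iint agh=\iint a_k(g_kh-gh)+\iint(a_k-a)\,gh$, the first term is dominated by $\L\,\|g_kh-gh\|_{L^1}\to0$ and the second tends to $0$ by weak-$*$ convergence of $a_k$ against $gh\in L^1$. Since also $q_k\weakc\bar q$, this forces $\bar q=a\,|D_{s,p}\bar u|^{p-2}D_{s,p}\bar u$. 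Passing to the limit in the weak formulation ($D_{s,p}v\in L^p$ is fixed and $q_k\weakc\bar q$ in $L^{p'}$) then yields $\iint\bar q\,D_{s,p}v=\langle f,v\rangle$ for every $v$, which together with the identification of $\bar q$ says precisely that $\bar u$ solves~\eqref{eq:vp} with coefficient $a$; by uniqueness $(\bar u,\bar q)=\sol_af$, and since this limit does not depend on the extracted subsequence, the full sequences converge---that is, $\cl_{a_k}$ $H$-converges to $\cl_a$.

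For the converse, suppose $\cl_{a_k}$ $H$-converges to $\cl_a$. Since $\|a_k\|_{L^\infty}\le\L$, every subsequence has a weakly-$*$ convergent sub-subsequence $a_{k_j}\weakc b$; by the implication just proved $\cl_{a_{k_j}}$ $H$-converges to $\cl_b$, and since it also $H$-converges to $\cl_a$, Definition~\ref{de:1} yields $\sol_bf=\sol_af$ for every $f\in W^{-s,p}(\O)$, so in particular the nonlocal fluxes coincide: $(b-a)\,|D_{s,p}u_f|^{p-2}D_{s,p}u_f=0$ a.e. Choosing $f=\cl_a u$ with $u(x)=\operatorname{dist}(x,\partial\O)\,\mathbf{1}_\O(x)\in W_0^{s,p}(\O)$---which is positive a.e.\ on $\O$ and whose level sets are Lebesgue-null---the common state is $u$, so $D_{s,p}u\ne0$ a.e.\ on $(\Rn\times\Rn)\setminus\big((\Rn\setminus\O)\times(\Rn\setminus\O)\big)$, the region on which the coefficient actually enters the operator; hence $b=a$ there. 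Thus every weak-$*$ limit point of $\{a_k\}$ equals $a$, so $a_k\weakc a$ weakly-$*$, and the proof is complete.

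The step I anticipate to be genuinely delicate is the weak passage $a_kg_k\weakc ag$ identifying the limit flux: \emph{neither} factor converges strongly---$a_k$ only weakly-$*$, and $g_k$ only a.e.\ with an $L^{p'}$ bound---so one must use \emph{both} the uniform $L^\infty$ bound on $a_k$ and the equi-integrability supplied by $p'>1$ on the finite-measure portions of $\Rn\times\Rn$, the tail being absorbed by the fixed test function $h\in L^p$. A lesser technical annoyance is the bookkeeping on $(\Rn\setminus\O)\times(\Rn\setminus\O)$---a set the operators never see, and on which $H$-limit coefficients are determined only up to null sets.
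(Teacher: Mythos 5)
Your proof of the implication ``weak-\(*\) convergence \(\Rightarrow\) \(H\)-convergence'' is correct, and it takes a genuinely different route from the paper. The paper first invokes the compactness result of \cite{bonder2017} (Theorem~\ref{th:compactness}) to extract an \(H\)-convergent subsequence with some limit \(\tilde a\), then compares energies through the variational characterization of Proposition~\ref{prop:existence} together with a generalized Fatou lemma for the measures \(d\nu_k=a_k\,dx\,dy\), and finally identifies \(\tilde a=a\) via the characterization of null nonlocal functionals of \cite[Proposition~17]{elbau}. You instead pass to the limit directly in the weak formulation: the compact embedding gives a.e.\ convergence of \(u_k\), hence of \(D_{s,p}u_k\) (the same exploitation of nonlocality the paper uses, there to feed Fatou's lemma), and your Vitali argument --- \(g_kh\to gh\) in \(L^1(\Rn\times\Rn)\) for each fixed \(h\in L^p\), using the \(L^{p'}\)-bound on \(g_k\) and the fixed \(h\) for equi-integrability and tightness --- is a correct way to multiply the merely weak-\(*\) convergent \(a_k\) against the a.e.-convergent \(g_k\) and identify the limit flux as \(a|D_{s,p}\bar u|^{p-2}D_{s,p}\bar u\). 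This buys a more elementary and self-contained argument: it uses neither Theorem~\ref{th:compactness}, nor the div-curl lemma, nor \cite{elbau}, and in effect reproves the relevant part of the compactness theorem along the way. (The missing factor \(1/2\) relative to \eqref{eq:wf} is harmless bookkeeping.)

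The necessity direction is where there is a genuine gap. Your skeleton (weak-\(*\) compactness of \(\{a_k\}\), the already-proved implication, then uniqueness of the limit) is the paper's, but your identification step is weaker than you claim. First, by Definition~\ref{de:1} the hypothesis ``\(\cl_{a_k}\) \(H\)-converges to \(\cl_a\)'' only provides flux convergence weakly in \(L^{p'}(\O\times\O)\), so equating the two limit fluxes gives \((b-a)|D_{s,p}u_f|^{p-2}D_{s,p}u_f=0\) a.e.\ in \(\O\times\O\) only; your distance-function test then yields \(b=a\) a.e.\ in \(\O\times\O\), not on all of \((\Rn\times\Rn)\setminus\bigl((\Rn\setminus\O)\times(\Rn\setminus\O)\bigr)\) as written. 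Second, and more seriously, no argument can recover \(b=a\) on \((\Rn\setminus\O)\times(\Rn\setminus\O)\): every \(u\in W^{s,p}_0(\O)\) has \(D_{s,p}u\equiv 0\) there, so states, fluxes and energies are completely insensitive to the coefficients on that set --- one may alternate \(a_k\) between \(\l\) and \(\L\) there without changing a single solution. Hence your final step ``every weak-\(*\) limit point of \(\{a_k\}\) equals \(a\), so \(a_k\weakc a\) weakly-\(*\) in \(L^\infty(\Rn\times\Rn)\)'' does not follow from what you proved, and your closing remark that the coefficient there is ``determined only up to null sets'' understates the issue --- it is not determined at all. The paper closes this direction differently: it runs the energy identity for \emph{arbitrary} \(f\) (hence arbitrary \(u^f\in W^{s,p}_0(\O)\)) through Propositions~\ref{prop:GimpliesH} and~\ref{prop:uniqueness} and \cite[Proposition~17]{elbau} to conclude \(\tilde a=a\); whether that citation genuinely controls \((\Rn\setminus\O)\times(\Rn\setminus\O)\) is a point one could press against the paper as well, but as your proof stands you must either supply such an identification argument or weaken the claimed conclusion to weak-\(*\) convergence on \((\Rn\times\Rn)\setminus\bigl((\Rn\setminus\O)\times(\Rn\setminus\O)\bigr)\).
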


This result generalizes, giving a simpler proof, previous results for a related nonlocal situation in the linear case~\cite[Th. 6]{andres2015nonlocal}. We also refer to~\cite{waurick}, where an abstract, general setting for nonlocal $H$-convergence is analyzed.

The outline of the paper is as follows. Section~2 is devoted to setting the functional analysis framework of this work, and to stating the main results from~\cite{bonder2017}, which are the starting point of the investigation presented here. Section~3 deals with the relation of the nonlocal $H$-convergence notion introduced in Definition~\ref{de:1} with the weaker notion of $G$-convergence. We show that the two notions are equivalent.  Additionally, we establish the uniqueness of $H$-limit. Finally, Section~4 contains the proof of Theorem~\ref{th:sufficient}.

\section{Preliminaries}

In this section we set the functional analysis framework in which problems are set and recall the main results from~\cite{bonder2017}.

We start by recalling some fundamental facts about fractional Sobolev spaces. The space $W^{s,p}(\Rn)$, previously defined,  is equipped with the norm
\[ \|u\|_{s,p}=\left(\|u\|_p^p+|u|_{s,p}^p\right)^{\frac{1}{p}},\]
where $\|u\|_p$ is the usual norm of $u$ in $L^p(\Rn)$ and
\[ |u|_{s,p}=\left(\int\int_{\Rn\times \Rn}\frac{|u(x)-u(y)|^p}{|x-y|^{n+sp}}\,dx\,dy\right)^\frac{1}{p}=\left(\int\int_{\Rn\times \Rn}\left|D_{s,p} u(x,y)\right|^p\,dx\,dy\right)^\frac{1}{p}\]
is the Gagliardo seminorm \cite{DiPaVa12,AdFo03}. With this definition $W^{s,p}(\Rn)$ is a separable and reflexive Banach space for $1<p<\infty$. $W^{s,p}_0(\O)$ is usually defined as
\[W_0^{s,p}(\O)=\overline{C_c^\infty(\O)}^{\|\cdot\|_{s,p}},\]
and in the case $\O$ has a Lipschitz boundary the following identification holds
\[ W_0^{s,p}(\O)=\left\{u\in W^{s,p}(\Rn)\;:\; u=0 \mbox{ a.e. in }\Rn\backslash \O\right\}.\]
An important mathematical fact is that, for $\O$ bounded, $W^{s,p}_0(\O)$ embeds continuously into $L^p(\O)$, thanks to the Poincar\'e's inquality in this fractional situation: there exists $C=C(n,s,|\O|)>0$ such that
\[\|u\|_p\le C|u|_{s,p},\]
for all $u\in W_0^{s,p}(\O)$. Furthermore, the Rellich-Kondrachov theorem can be extended to fractional Sobolev spaces, and the embedding of $W^{s,p}_0(\O)$ into $L^p(\O)$ is compact. Proofs of these results can be found, for instance, in \cite{DiPaVa12}.
The dual space of $W^{s,p}_0(\O)$ is denoted by $W^{-s,p'}(\O)$, and its norm is given by
\[\|f\|_{-s,p'}=\sup\left\{ \langle f,u\rangle \;:\; u\in W_0^{s,p}(\O), \, |u|_{s,p}=1\right\}.\]

We now focus on the precise statement of the problem~\eqref{eq:vp}, which should be understood in the weak sense. Thus, we require that $\cl_au=f$ holds in the sense of distributions, and we say that $u \in W_0^{s,p}(\O)$ is a solution to~\eqref{eq:vp} if
\begin{equation}\label{eq:wf}
  \frac{1}{2}\int\int_{\Rn\times \Rn}a(x,y) \frac{|u(x)-u(y)|^{p-2}(u(x)-u(y))(v(x)-v(y))}{|x-y|^{n+sp}}\,dx\,dy=\langle f, v\rangle, \end{equation}
for all test functions $v\in C_0^\infty(\O)$. The first result documents the well-posedness of the problem~\eqref{eq:vp} \cite[Prop. 2.2, Cor. 2.4]{bonder2017}.

\begin{proposition}\label{prop:existence} For each $a\in \mc{A}_{\l,\L}$ and $f\in W^{-p,s}(\O)$, the problem~\eqref{eq:vp} admits a unique solution $u \in W_0^{s,p}(\O)$.  Furthermore, this solution is also the unique minimizer in $W_0^{s,p}(\O)$ of functional
\[I_a(v)=\frac{1}{2p}\int\int_{\Rn\times \Rn}a(x,y)|D_{s,p}v(x,y)|^p\,dx\,dy-\langle f,v\rangle.\]
\end{proposition}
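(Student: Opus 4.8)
The plan is to prove the proposition by the direct method of the calculus of variations, exploiting that the energy $I_a$ is strictly convex, coercive, and weakly lower semicontinuous on $W_0^{s,p}(\O)$, and then to identify the weak formulation~\eqref{eq:wf} with the first-order optimality condition for the minimization of $I_a$. This simultaneously yields existence, uniqueness, and the variational characterization in a single stroke.

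First I would record the functional-analytic properties of $I_a$. For coercivity, the lower bound $a\ge\l$ together with the definition of the Gagliardo seminorm gives $I_a(v)\ge\frac{\l}{2p}|v|_{s,p}^p-\|f\|_{-s,p'}|v|_{s,p}$; since $p>1$ the $p$-th power eventually dominates the linear term, so $I_a(v)\to+\infty$ as $|v|_{s,p}\to\infty$, and by the fractional Poincar\'e inequality $|\cdot|_{s,p}$ is an equivalent norm on $W_0^{s,p}(\O)$, yielding coercivity in the full norm. For convexity, I would observe that $v\mapsto D_{s,p}v(x,y)$ is linear and $t\mapsto|t|^p$ is strictly convex for $p>1$, so $v\mapsto\int\int a|D_{s,p}v|^p$ is convex (using $a\ge\l>0$), while the affine term $-\langle f,v\rangle$ preserves convexity; strict convexity follows because $D_{s,p}v_1=D_{s,p}v_2$ a.e.\ forces $v_1=v_2$ (the seminorm is a norm on $W_0^{s,p}(\O)$). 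Being convex and strongly continuous (it is bounded above by $\frac{\L}{2p}|v|_{s,p}^p$ plus a continuous linear term), $I_a$ is weakly lower semicontinuous.

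Next I would run the direct method. Taking a minimizing sequence $\{v_k\}$, coercivity makes it bounded in the reflexive space $W_0^{s,p}(\O)$, so up to a subsequence $v_k\weakc u$; weak lower semicontinuity gives $I_a(u)\le\liminf_k I_a(v_k)=\inf I_a$, so $u$ is a minimizer, and strict convexity guarantees it is the unique one.

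Finally, I would link minimizers to weak solutions through first-order optimality. A direct computation of the G\^ateaux derivative, justified by dominated convergence using the bound $a\le\L$ and the $L^p$-integrability of $D_{s,p}u$, shows that for every $v\in C_0^\infty(\O)$,
\[\langle I_a'(u), v\rangle=\frac{1}{2}\int\int_{\Rn\times\Rn}a(x,y)\frac{|u(x)-u(y)|^{p-2}(u(x)-u(y))(v(x)-v(y))}{|x-y|^{n+sp}}\,dx\,dy-\langle f,v\rangle,\]
where the exponent matches~\eqref{eq:wf} since $(\tfrac{n}{p}+s)p=n+sp$. As $I_a$ is convex, $u$ minimizes $I_a$ if and only if $I_a'(u)=0$, which is precisely the weak formulation~\eqref{eq:wf}; hence the unique minimizer is the unique solution of~\eqref{eq:vp} and conversely. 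The only genuine technical points are justifying the differentiation under the integral sign and passing from test functions to all admissible directions by the density of $C_0^\infty(\O)$ in $W_0^{s,p}(\O)$; both are routine given the uniform bounds $\l\le a\le\L$, which in particular guarantee that the flux $q=a|D_{s,p}u|^{p-2}D_{s,p}u$ lies in $L^{p'}(\O\times\O)$ so that every pairing above is well defined.
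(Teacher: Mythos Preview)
Your argument is correct and is exactly the standard route (direct method plus Euler--Lagrange identification) one would expect for a strictly convex, coercive energy of this type. Note, however, that the paper does not actually supply its own proof of this proposition: it is quoted from \cite[Prop.~2.2, Cor.~2.4]{bonder2017} and stated without argument. So there is no in-paper proof to compare against; your write-up simply fills in what the cited reference establishes, and does so cleanly.
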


In addition to the non-local gradient~\eqref{eq:nlocgrad}, for each $\phi\in L^{p'}(\Rn\times \Rn)$ it will be convenient to define the corresponding $(s,p)$-divergence operator by
\[d_{s,p} \phi(x)=p.v. \int_{\Rn} \frac{\phi(x,y)-\phi(y,x)}{|x-y|^{\frac{n}{p}+s}}\,dy.\]
The following result summarizes several properties of these operators \cite[Th. 3.1., Lem. 3.3]{bonder2017}.
\begin{theorem} \label{th:H-convergence}The following assertions hold:
\begin{enumerate}
\item \label{integration by parts} \textbf{Integration by parts:}
For each $\phi\in L^{p'}(\Rn\times \Rn)$ and $u\in W^{s,p}(\Rn)$ we have the inclusion
$d_{sp} \phi \in W^{-s,p'}(\Rn)$, the dual of $W^{s,p}(\Rn)$, and the integration by parts formula
\[\int\int_{\Rn\times \Rn} \phi D_{s,p}u \,dx\,dy=\langle d_{s,p}\phi,u\rangle;\]
\item \label{div-curl} \label{nonlocal div-curl lemma} \textbf{Nonlocal div-curl lemma:} given $\phi_k,\phi\in L^{p'}(\Rn\times \Rn)$ and $v_k,\,v\in W^{s,p}(\Rn)$, \(k=1,2,\dots\) such that $v_k\rightharpoonup v$, weakly in $W^{s,p}(\Rn)$, $\phi_k\rightharpoonup \phi$, weakly in $L^{p'}(\Rn\times \Rn)$ and $d_{s,p} \phi_k\rightarrow d_{s,p} \phi$, strongly in $W_{loc}^{-s,p'}(\Rn)$ then
\[ \phi_kD_{s,p}v_k \rightarrow \phi D_{s,p}v \]
in the sense of distributions.
\end{enumerate}
\end{theorem}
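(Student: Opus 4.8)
The plan is to prove the two assertions in turn: the integration-by-parts formula is established first by a direct duality estimate together with a symmetrization, and it then serves as the main lever in the proof of the div--curl lemma, where it is combined with a diagonal decomposition of the test function, the Rellich--Kondrachov compactness, and a careful tail estimate.

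For the integration-by-parts formula, given $\phi\in L^{p'}(\Rn\times\Rn)$ I would define the functional $T_\phi\colon u\mapsto \int\!\int_{\Rn\times\Rn}\phi\,D_{s,p}u\,\dd x\,\dd y$ on $W^{s,p}(\Rn)$. H\"older's inequality with $\|D_{s,p}u\|_{L^p}=|u|_{s,p}\le\|u\|_{s,p}$ gives $|T_\phi(u)|\le\|\phi\|_{p'}\|u\|_{s,p}$, so $T_\phi\in W^{-s,p'}(\Rn)$; it remains to identify $T_\phi$ with $d_{s,p}\phi$. For $u\in C_c^\infty(\Rn)$ the principal-value integral defining $d_{s,p}\phi$ is absolutely convergent after the usual splitting near the diagonal, and a Fubini argument together with the exchange $x\leftrightarrow y$ (using symmetry of $|x-y|$) turns $\langle d_{s,p}\phi,u\rangle=\int_{\Rn} d_{s,p}\phi(x)\,u(x)\,\dd x$ into $\int\!\int (u(x)-u(y))|x-y|^{-n/p-s}\phi(x,y)\,\dd x\,\dd y=T_\phi(u)$. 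Density of $C_c^\infty(\Rn)$ in $W^{s,p}(\Rn)$ extends the identity to all $u$. This part is essentially bookkeeping, the only care being the justification of the principal value and of Fubini.

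For the div--curl lemma I fix $\psi\in C_c^\infty(\Rn\times\Rn)$ and set $\Psi_k=\phi_k\psi\in L^{p'}(\Rn\times\Rn)$. Applying the integration-by-parts formula just proved to $\Psi_k$ and $v_k$ gives the pivotal identity
\[\int\!\int_{\Rn\times\Rn}\phi_k\,D_{s,p}v_k\,\psi\,\dd x\,\dd y=\langle d_{s,p}\Psi_k,v_k\rangle,\]
so everything reduces to understanding $d_{s,p}(\phi_k\psi)$. Writing $\psi(x,y)=\beta(x)+(\psi(x,y)-\beta(x))$ with $\beta(x)=\psi(x,x)\in C_c^\infty(\Rn)$, a direct computation splits $d_{s,p}\Psi_k=\beta\,d_{s,p}\phi_k+S_k$, where the commutator $S_k$ carries the kernel $(\psi(x,y)-\psi(x,x))|x-y|^{-n/p-s}=O(|x-y|^{1-n/p-s})$, one order less singular than that of $d_{s,p}$, and is supported in a fixed compact set since $\psi$ is. For the leading term, $\langle\beta\,d_{s,p}\phi_k,v_k\rangle=\langle d_{s,p}\phi_k,\beta v_k\rangle$, and because $\beta v_k\rightharpoonup\beta v$ in $W^{s,p}(\Rn)$ with supports in a fixed compact set while $d_{s,p}\phi_k\to d_{s,p}\phi$ strongly in $W^{-s,p'}_{\mathrm{loc}}(\Rn)$, the strong--weak duality pairing converges to $\langle d_{s,p}\phi,\beta v\rangle$. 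For the remainder I rewrite $\langle S_k,v_k\rangle$, after a further $x\leftrightarrow y$ symmetrization, as $\int\!\int\phi_k\,G_k\,\dd x\,\dd y$ with $G_k(x,y)=(v_k(x)[\psi(x,y)-\psi(x,x)]-v_k(y)[\psi(x,y)-\psi(y,y)])|x-y|^{-n/p-s}$, and pair the weak limit $\phi_k\rightharpoonup\phi$ in $L^{p'}$ against the strong $L^p$ limit of $G_k$. Reversing the same algebra for the limit objects shows that the two limits sum to $\int\!\int\phi\,D_{s,p}v\,\psi$, which is the claim.

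The crux, and the main obstacle, is the strong $L^p(\Rn\times\Rn)$ convergence of $G_k$ (and, in the model case $\psi=\theta(x)$, of $v_k(y)\,D_{s,p}\theta(x,y)$). Here $v_k\to v$ only locally, by the fractional Rellich--Kondrachov theorem, whereas the integration is global. The reduced singularity $|x-y|^{1-n/p-s}$ keeps $G_k$ in $L^p$ near the diagonal, since $p(1-s)>0$, while the compact support of $\psi$ forces the kernel to decay like $|x-y|^{-n-sp}$ at infinity. The argument is therefore a splitting into a neighbourhood of the support of $\psi$, where local strong convergence applies, and a tail, controlled uniformly in $k$ by the decay of the Gagliardo-type density $g(y)=\int_{\Rn}|D_{s,p}\theta(x,y)|^p\,\dd x\in L^1\cap L^\infty$ (and its analogue for $G_k$). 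Making this tail estimate uniform in $k$, so that the weak $L^{p'}$ convergence of $\phi_k$ may legitimately be paired against a genuinely strong $L^p$ limit, is the key technical point.
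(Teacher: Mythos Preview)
The paper does not prove this theorem: it is stated with the attribution ``\cite[Th.~3.1., Lem.~3.3]{bonder2017}'' and used as a black box in the subsequent arguments. There is therefore no proof in the paper to compare your proposal against.

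That said, your sketch follows the standard route for such results and is essentially sound. A small point: the decomposition $d_{s,p}(\phi_k\psi)=\beta\,d_{s,p}\phi_k+S_k$ is not quite as clean as you write, because in the divergence one must also antisymmetrize $\beta$, producing an additional term of the form $\phi_k(y,x)(\beta(x)-\beta(y))|x-y|^{-n/p-s}$; but since $\beta(x)-\beta(y)=O(|x-y|)$ this lands in the same ``one order less singular'' remainder class as your $S_k$ and is handled by the same tail/local-compactness splitting you describe. The identification of $T_\phi$ with $d_{s,p}\phi$ in part~1 is also slightly delicate for rough $\phi\in L^{p'}$, where the p.v.\ integral need not exist pointwise; the clean way is to take the duality formula itself as the definition of $d_{s,p}\phi\in W^{-s,p'}$, which is in effect what you do.
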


Note that owing to the integration by parts identity and the assumed symmetry of the conductivity
\(a(x,y)=a(y,x)\),  equation~\eqref{eq:wf} can be equivalently understood as
\(d_{s,p} q = 2f\), where \((u,q)=\sol_{a}f\).

There are several other results in the literature, which are related to the previous one. In other works dealing with nonlocal or fractional problems a nonlocal vector calculus has been developed in order deal with the involved operators. References including integration by parts formulas are~\cite{du2013, mengesha2015,mengesha2017,bellido2018}. Regarding the div-curl lemma, this is a very interesting compensated compactness-type result in the nonlocal context. In~\cite{waurick2} a general analytic perspective for div-curl lemma that includes the nonlocal situation is considered. It is interesting to refer to~\cite{bellido2018}, where in a very related situation to the one analyzed here, the weak convergence of any minor of the Riesz fractional gradient of vector fields has been shown by means of a nonlocal Piola identity.

Now we are prepared to state our point of departure, \cite[Theorem~4.6]{bonder2017}, which establishes that $\mc{A}_{\l,\L}$ is sequentially relatively compact with respect to $H$-convergence.

\begin{theorem} \label{th:compactness} $0<\l\le\L$. For any sequence $\{a_k\}\subset \mc{A}_{\l,\L}$, there exists a subsequence $\{a_{k'}\}$ and $a\in \mc{A}_{\l,\L}$ such that $\cl_{a_k}$ $H$-converges to $\cl_a$.
\end{theorem}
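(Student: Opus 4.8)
The plan is to combine weak-\(*\) compactness of the coefficients with a monotonicity argument powered by the integration-by-parts identity and the compensated compactness of the nonlocal div-curl lemma (Theorem~\ref{th:H-convergence}). Since $\mc{A}_{\l,\L}$ is a bounded, convex, and weakly-\(*\) closed subset of $L^\infty(\Rn\times\Rn)$, sequential Banach--Alaoglu produces a subsequence, still written $\{a_k\}$, and a limit $a\in\mc{A}_{\l,\L}$ with $a_k\weakc a$ weakly-\(*\). I claim this single subsequence realizes $H$-convergence for every right-hand side. Fix $f\in W^{-s,p'}(\O)$ and set $(u_k,q_k)=\sol_{a_k}f$. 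Testing~\eqref{eq:wf} with $u_k$ and using $a_k\ge\l$ together with the fractional Poincar\'e inequality bounds $\{u_k\}$ in $W_0^{s,p}(\O)$, while $a_k\le\L$ bounds $\{q_k\}$ in $L^{p'}(\O\times\O)$. Passing to a further subsequence (a priori depending on $f$) I may assume $u_k\weakc u$ in $W_0^{s,p}(\O)$ and $q_k\weakc q$ in $L^{p'}(\O\times\O)$; the compact embedding $W_0^{s,p}(\O)\hookrightarrow L^p(\O)$ gives $u_k\to u$ in $L^p$ and hence, along a subsequence, a.e., so that $D_{s,p}u_k\to D_{s,p}u$ a.e.\ in $\Rn\times\Rn$.

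Next I would fix the limiting divergence and the energy. Continuity of the linear operator $d_{s,p}$ gives $d_{s,p}q=\lim_k d_{s,p}q_k=2f$, each pair solving~\eqref{eq:vp}. The integration-by-parts formula (Theorem~\ref{th:H-convergence}, part~\ref{integration by parts}) then yields
\[\int\int_{\Rn\times\Rn}q_kD_{s,p}u_k=\langle d_{s,p}q_k,u_k\rangle=2\langle f,u_k\rangle\to2\langle f,u\rangle=\langle d_{s,p}q,u\rangle=\int\int_{\Rn\times\Rn}qD_{s,p}u,\]
which is exactly the compensated-compactness conclusion of the div-curl lemma for this problem.

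The crux is to upgrade this to strong convergence of the nonlocal gradients and so identify the flux. Using scalar monotonicity of $t\mapsto|t|^{p-2}t$, with a constant $c>0$ when $p\ge2$ (and an analogous H\"older-corrected estimate when $1<p<2$),
\[c\,\l\int\int_{\Rn\times\Rn}|D_{s,p}u_k-D_{s,p}u|^p\le\int\int_{\Rn\times\Rn}a_k\bigl(|D_{s,p}u_k|^{p-2}D_{s,p}u_k-|D_{s,p}u|^{p-2}D_{s,p}u\bigr)\bigl(D_{s,p}u_k-D_{s,p}u\bigr).\]
Expanding the right-hand side, $\int\int q_kD_{s,p}u_k$ converges by the energy identity, and $\int\int q_kD_{s,p}u$ and $\int\int a_k|D_{s,p}u|^p$ converge by weak, respectively weakly-\(*\), convergence against the \emph{fixed} functions $D_{s,p}u\in L^p$ and $|D_{s,p}u|^p\in L^1$. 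The only delicate term, $\int\int a_k|D_{s,p}u|^{p-2}D_{s,p}u\,D_{s,p}u_k$, I would split off as $\int\int a_k|D_{s,p}u|^{p-2}D_{s,p}u\,(D_{s,p}u_k-D_{s,p}u)$: its integrand tends to $0$ a.e.\ and, since $|D_{s,p}u|^p\in L^1(\Rn\times\Rn)$ is uniformly integrable and tight while $0\le a_k\le\L$, it is uniformly integrable and tight, so Vitali's theorem sends this integral to $0$. Hence the right-hand side tends to $0$, $D_{s,p}u_k\to D_{s,p}u$ strongly in $L^p$, and therefore $q_k=a_k|D_{s,p}u_k|^{p-2}D_{s,p}u_k\weakc a|D_{s,p}u|^{p-2}D_{s,p}u$ as a weakly-\(*\) times strongly convergent product; with $d_{s,p}q=2f$ this gives $(u,q)=\sol_af$ and $a\in\mc{A}_{\l,\L}$. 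Because the limit is the \emph{unique} solution $\sol_af$, every subsequence of $(u_k,q_k)$ has a further subsequence converging to it, so the whole sequence converges, $\cl_{a_k}$ $H$-converges to $\cl_a$ for this $f$, and since $f$ was arbitrary the proof is complete. The main obstacle is precisely this mixed term: the Vitali/uniform-integrability step, resting on the $L^1$-integrability of the \emph{limit} nonlocal gradient, is what replaces the oscillating correctors of the local $p$-Laplacian theory and lets the effective coefficient be read off as the bare weak-\(*\) limit $a$.
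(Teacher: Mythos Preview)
Your argument is correct and takes a genuinely different route from the paper. The paper does not prove Theorem~\ref{th:compactness} from scratch: it invokes \cite[Theorem~4.6]{bonder2017} for the existence of an $H$-convergent subsequence (with $H$-limit a~priori only in $\mc{A}_{\l,\L^{p'}/\l}$) and then supplies a short argument---based on weak lower semicontinuity of $\int\int|q_k|^{p'}\varphi$ and the nonlocal div-curl lemma---to sharpen the upper bound to $a\le\L$. You instead bypass \cite{bonder2017} entirely: you first extract a weak-$*$ convergent subsequence of the coefficients (so $a\in\mc{A}_{\l,\L}$ is immediate) and then prove directly, via the scalar monotonicity inequality and the Vitali step for the cross term $\int\int a_k|D_{s,p}u|^{p-2}D_{s,p}u\,(D_{s,p}u_k-D_{s,p}u)$, that weak-$*$ convergence of $a_k$ forces strong $L^p$-convergence of the nonlocal gradients and hence $H$-convergence to $\cl_a$. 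In effect you establish the sufficiency half of Theorem~\ref{th:sufficient} first and deduce compactness from it, whereas the paper runs the logic in the opposite direction---citing compactness from \cite{bonder2017} and then using it, together with a generalized Fatou argument applied to the variational characterization $I_{a_k}$, to obtain Theorem~\ref{th:sufficient}. Your route is more self-contained and makes explicit why no oscillating correctors are needed (the fixed $L^1$ function $|D_{s,p}u|^p$ supplies the uniform integrability); the paper's route has the advantage of cleanly separating the compactness mechanism from the identification of the limit and of isolating the upper-bound improvement as a stand-alone observation.
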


To be precise this is not the exact statement of~\cite[Theorem~4.6]{bonder2017}, as it differs in the upper bound on the coefficients of the $H$-limiting problem. In ~\cite[Theorem~4.6]{bonder2017} it is claimed that the $H$-limit $a\in \mc{A}_{\l,\frac{\L^{p'}}{\l}}$.  However, we will show that the more natural upper bound
\begin{equation}\label{eq:upperbound}
a(x,y)\le\L,\quad \text{a.e.\ in \(\Rn\times\Rn\)},
\end{equation}
holds in this case.
Indeed, let us assume that $\cl_{a_k}$ $H$-converges to $a$. Let us further fix an arbitrary $f\in L^{p'}\setminus\{0\}$, and put $(u_k,q_k) = \sol_{a_k}f$, and \((u,q)=\sol_{a}f\). Since $a_k\in \mc{A}_{\l,\L}$, for any $\varphi\in C_c^\infty(\Rn\times \Rn)$, \(\varphi\geq 0\) we have
\[\begin{aligned}
 \int\int_{\Rn\times \Rn}
 |q_k|^{p'} \varphi\,dx\,dy
 &= \int\int_{\Rn\times \Rn}a_k^{p'}|D_{s,p} u_k|^{p} \varphi\,dx\,dy\\
 &\le \L^{p'-1}  \int\int_{\Rn\times \Rn}\varphi q_k D_{s,p} u_k \,dx\,dy.
 \end{aligned}\]
 The term on the left is weakly lower semicontinuous with respect to the fluxes, which
 converge weakly owing to the \(H\)-convergence assumption.
 The term on the right converges owing to the non-local div-curl lemma.
 Passing to the limit we therefore arrive at the inequality
\[\int\int_{\Rn\times \Rn}a^{p'}|D_{s,p} u|^{p} \varphi\,dx\,dy\le \L^{p'-1}  \int\int_{\Rn\times \Rn}a|D_{s,p} u|^{p} \varphi\,dx\,dy,\]
and as $\varphi$ is nonnegative but otherwise arbitrary,
\begin{equation}\label{eq:bound}
a^{p'}|D_{s,p} u|^{p}\le \L^{p'-1}a|D_{s,p} u|^{p},\quad\text{a.e.\ in \(\Rn\times\Rn\)}.
\end{equation}
Additionally, since \(f\) is arbitrary, $u$ is also arbitrary, and \eqref{eq:bound} holds for any $u\in W^{s,p}_0(\O)$, and hence the inequality~\eqref{eq:upperbound} holds.

\section{$G$-convergence}

In the local case, $H$-convergence was proposed by Murat and Tartar, see for example~\cite{Murat2}, as an extension of the previously proposed $G$-convergence concept~\cite{spagnolo1967sul}. $G$-convergence was formulated for linear elliptic equations with symmetric coefficients in divergence form and only requires weak convergence of the states. $H$-convergence, on the other hand, requires convergence of both states and fluxes, and has been formulated for problems with non-symmetric coefficients. In the case of elliptic PDEs with symmetric coefficients both notion are known to coincide.  In the more general case of non-symmetric coefficients $G$-convergence is less useful in the sense that the $G$-limit is not guaranteed to be unique~\cite[Section~1.3.2]{allaire2012shape}.

The previous definition of nonlocal $H$-convergence requires both convergence of the states and convergence of the fluxes. As a consequence of this, owing to the div-curl lemma, the associated energy
\[E(a)= \int\int_{\Rn\times \Rn} a |D_{s,p} u|^p\,dx\,dy = \int\int_{\Rn\times \Rn} q D_{s,p} u\,dx\,dy, \]
with \((u,q)=\sol_{a}f\) is continuous with respect to $H$-convergence. This is a remarkable and a desirable property, especially when dealing with optimal design problems. In this section,  we show that in the considered nonlocal situation of scalar and symmetric coefficients in $\mc{A}_{\l,\L}$, the requirement on flux convergence in the definition of $H$-convergence is unnecessary. In other words, nonlocal $G$-convergence implies $H$-convergence, precisely as in the local case.  First of all we make rigorous the definition of nonlocal $G$-convergence.

\begin{definition}
  Given the sequence of coefficients $\{ a_k \}_{k=1}^\infty \subset \mc{A}_{\l,\L}$,
  we say that $\cl_{a_k}$ $G$-converges to $\cl_a$ if for any \(f\in W^{-s,p}(\O)\)
  we have
  \[u_k\rightharpoonup u,\quad \mbox{weakly in } W^{s,p}(\O),\]
  where \((u_k,q_k)=\sol_{a_k}f\) and \((u,q)=\sol_{a}f\).
\end{definition}

\begin{proposition} \label{prop:GimpliesH}
Consider a sequence $\{a_k\}_{k=0}^\infty \subset \mc{A}_{\l,\L}$.  Then  $\cl_{a_k}$ $H$-converges to $\cl_{a}$
if and only if $\cl_{a_k}$ $G$-converges to $\cl_{a}$, with \(a\in \mc{A}_{\l,\L}\).
\end{proposition}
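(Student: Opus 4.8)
The plan is the following. The implication ``$\cl_{a_k}$ $H$-converges to $\cl_a$'' $\Rightarrow$ ``$\cl_{a_k}$ $G$-converges to $\cl_a$'' is immediate, since convergence of states is one of the two requirements in Definition~\ref{de:1}; the accompanying membership $a\in\mc{A}_{\l,\L}$ is exactly the content of the computation around~\eqref{eq:upperbound}, applied along a subsequence supplied by Theorem~\ref{th:compactness}, the uniqueness of the $H$-limit (obtained below) then forcing $a$ to coincide on $\O\times\O$ with that subsequential limit. So the substance is the converse, which I would deduce from a sub-subsequence argument resting on Theorem~\ref{th:compactness} and on the following \textbf{uniqueness lemma}: if $a,\hat a\in\mc{A}_{\l,\L}$ and for every $f\in W^{-s,p}(\O)$ the states of $\sol_a f$ and $\sol_{\hat a}f$ coincide, then $a=\hat a$ a.e.\ in $\O\times\O$, which is the only part of the coefficients that problem~\eqref{eq:vp} sees.

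Granting the lemma, assume $\cl_{a_k}$ $G$-converges to $\cl_a$ with $a\in\mc{A}_{\l,\L}$, fix $f\in W^{-s,p}(\O)$, and put $(u_k,q_k)=\sol_{a_k}f$, $(u,q)=\sol_a f$. Testing~\eqref{eq:wf} with $v=u_k$ and using $a_k\ge\l$ gives $|u_k|_{s,p}^{\,p-1}\le\frac{2}{\l}\|f\|_{-s,p'}$, so the fluxes $q_k=a_k|D_{s,p}u_k|^{p-2}D_{s,p}u_k$ are bounded in $L^{p'}(\O\times\O)$; since that space is reflexive and separable it is enough to show that every subsequence of $\{q_k\}$ has a further subsequence converging weakly to $q$. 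Given such a subsequence, Theorem~\ref{th:compactness} extracts a further subsequence $\{a_{k'}\}$ that $H$-converges to some $\cl_{\hat a}$ with $\hat a\in\mc{A}_{\l,\L}$; along $\{a_{k'}\}$ the states then converge weakly both to the state of $\sol_{\hat a}f$ and to $u$, and, $f$ being arbitrary, the uniqueness lemma yields $\hat a=a$ a.e.\ in $\O\times\O$. Hence the fluxes along $\{a_{k'}\}$ converge weakly in $L^{p'}(\O\times\O)$ to $\hat a\,|D_{s,p}u|^{p-2}D_{s,p}u=a\,|D_{s,p}u|^{p-2}D_{s,p}u=q$, which together with the convergence of states gives $H$-convergence.

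For the uniqueness lemma, note first that for $w\in W^{s,p}_0(\O)$ the functional $v\mapsto\frac12\int\!\!\int_{\Rn\times\Rn}a(x,y)\frac{|w(x)-w(y)|^{p-2}(w(x)-w(y))(v(x)-v(y))}{|x-y|^{n+sp}}\,dx\,dy$ belongs to $W^{-s,p'}(\O)$; calling it $\cl_a w$, Proposition~\ref{prop:existence} makes $w$ the state of $\sol_a(\cl_a w)$, so the state map is onto $W^{s,p}_0(\O)$. Taking $f=\cl_a w$ for arbitrary $w\in W^{s,p}_0(\O)$, the coincidence hypothesis becomes $\cl_a w=\cl_{\hat a}w$, i.e.\ with $b=a-\hat a$,
\[\int\!\!\int_{\Rn\times\Rn}b(x,y)\,\frac{|w(x)-w(y)|^{p-2}(w(x)-w(y))(v(x)-v(y))}{|x-y|^{n+sp}}\,dx\,dy=0\qquad\text{for all }v,w\in W^{s,p}_0(\O).\]
Now choose $w,v\in C_c^\infty(\O)$ with disjoint supports $K,L$: on the set where the integrand is nonzero one has either $x\in K,\,y\in L$ or $x\in L,\,y\in K$, and using $w(y)=v(x)=0$ on the first, $w(x)=v(y)=0$ on the second, and $b(x,y)=b(y,x)$, the identity reduces to $\int_K\!\int_L b(x,y)\,|w(x)|^{p-2}w(x)\,v(y)\,|x-y|^{-n-sp}\,dy\,dx=0$. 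Letting $w,v\ge0$ vary, so that $|w|^{p-2}w=w^{p-1}$ runs over a uniformly dense set of nonnegative functions on $K$ and $v$ likewise on $L$, then passing to differences, and using $|x-y|^{-n-sp}>0$ on $K\times L$, we get $b=0$ a.e.\ on $K\times L$; as $K,L$ exhaust all pairs of disjoint compact subsets of $\O$, $b=0$ a.e.\ on $\O\times\O$. Applied to two $H$-limits of a single sequence, the same lemma gives the uniqueness of the $H$-limit used above.

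The main obstacle is the uniqueness lemma, and within it the disjoint-support test step: this has no counterpart for the local $p$-Laplacian and is where nonlocality is genuinely exploited. The only other delicate point is the density argument turning the vanishing of the bilinear expression $\int_K\int_L b\,(w^{p-1}\!\otimes v)\,|x-y|^{-n-sp}$ into $b=0$ a.e.\ on $K\times L$, together with the verification that the approximating functions can be kept in $C_c^\infty$; the rest is routine compactness-and-subsequence bookkeeping.
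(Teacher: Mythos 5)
Your proof is correct, and its skeleton (compactness via Theorem~\ref{th:compactness}, identification of the limiting coefficient, then sub-subsequence bookkeeping) is the same as the paper's, but the identification step is genuinely different. The paper only extracts the energy identity $\int\!\!\int_{\Rn\times\Rn}(a-\tilde a)\,|D_{s,p}u^f|^p\,dx\,dy=0$ for every $f$ and then invokes the external characterization of null nonlocal functionals of~\cite[Proposition~17]{elbau} to conclude $a=\tilde a$; you instead note that coincidence of the states for every right-hand side forces $\cl_a w=\cl_{\tilde a} w$ in $W^{-s,p'}(\O)$ for every $w$ (surjectivity of the state map), i.e.\ the full bilinear identity rather than just its diagonal, and then kill the difference $b=a-\tilde a$ by testing with $w,v$ of disjoint supports, where the form degenerates to the linear pairing $\int_K\!\int_L b\,|w(x)|^{p-2}w(x)\,v(y)\,|x-y|^{-n-sp}\,dy\,dx$ whose vanishing yields $b=0$ a.e.\ off the diagonal, hence a.e.\ in $\O\times\O$. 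This buys a self-contained, elementary argument that avoids the citation to~\cite{elbau} and simultaneously delivers the uniqueness statement of Proposition~\ref{prop:uniqueness}; the price is the density/approximation bookkeeping you flag, which is indeed routine since $t\mapsto|t|^{p-2}t$ is a homeomorphism of $\R$ and the kernel is bounded on products of disjoint compacta. One small inaccuracy: your parenthetical claim that $\O\times\O$ is ``the only part of the coefficients that problem~\eqref{eq:vp} sees'' is not quite right---because solutions are extended by zero, the weak form~\eqref{eq:wf} also involves $a$ on $\O\times(\Rn\setminus\O)$ and its mirror image---but this remark is not load-bearing: equality of the coefficients on $\O\times\O$ together with equality of the states is all you need to identify the limit fluxes, which by Definition~\ref{de:1} live in $L^{p'}(\O\times\O)$. (In this respect your conclusion is also more honest than the paper's claim of equality a.e.\ in $\Rn\times\Rn$, which cannot follow since the problem is blind to the coefficient on $(\Rn\setminus\O)\times(\Rn\setminus\O)$.)
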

\begin{proof} Obviously $H$-convergence implies $G$-convergence.
  For the sake of contradiction, let us now assume that we have \(G\)-convergence, but not \(H\)-convergence.
  For an arbitrary $f\in W^{-s,p}(\O)$ let $(u^f_k,q^f_k)=\sol_{a_k}f$ and $(u^f,q^f)=\sol_{a}f$.
  Owing to the assumed \(G\)-convergence we have \(u^f_{k} \rightharpoonup u^f\) in \(W^{s,p}_0(\O)\).
  The assumed lack of \(H\)-convergence is equivalent to saying that for some \(\hat{f}\in W^{-s,p}(\O)\)
  there is a subsequence \(a_{k'}\) and a \(L^{p'}(\Rn\times\Rn)\)-weakly open neighbourhood \(\hat{N}\) of
  \(q^{\hat{f}}\), such that \(q^{\hat{f}}_{k'}\not\in \hat{N}\), for all \(k'\).
  Owing to Theorem~\ref{th:compactness}, the set \(\{\cl_{a_{k'}}\}\) is relatively sequentially compact with respect to \(H\)-convergence. Therefore there is a further subsequence $a_{k''}$ and $\tilde{a}\in \mc{A}_{\l,\L}$ such that $\cl_{a_{k''}}$ $H$-converges to $\cl_{\tilde{a}}$. Consequently, if we put $(\tilde{u}^f,\tilde{q}^f)=\sol_{\tilde{a}}f$,
then \(u^f=\tilde{u}^f\) owing to the assumed \(G\)-convergence and the uniqueness of the weak limit, and the uniqueness of solutions to~\eqref{eq:wf}.
Therefore we have the equality
\[\int\int_{\Rn\times \Rn} (a-\tilde{a})|D_{s,p}u^f|^p\,dx\,dy=0,\]
which holds for an arbitrary \(f\).
Since $f$ is arbitrary, $u^f\in W^{s,p}_0(\O)$ is also arbitrary. Applying \cite[Proposition~17]{elbau}, which characterizes null nonlocal functionals, we obtain that necessarily $a=\tilde{a}$, a.e.\ in $\Rn\times \Rn$.
But then \(\tilde{q}^f=q^f\), \(q_{k''}^f\rightharpoonup q^f\) in \(L^{p'}(\Rn\times\Rn)\), and consequently
\(q^{\hat{f}}_{k''}\in \hat{N}\) for all large \(k''\), which is a contradiction.
\end{proof}

Another important point is the uniqueness of the $G$-limit. This is established in the next result.
\begin{proposition}\label{prop:uniqueness}
The $G$-limit of a sequence in $\mc{A}_{\l,\L}$ is unique.
\end{proposition}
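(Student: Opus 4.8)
The plan is to reduce the claim, just as in the proof of Proposition~\ref{prop:GimpliesH}, to the characterization of null nonlocal functionals in~\cite[Proposition~17]{elbau}; in fact no compactness argument is needed this time. Suppose $\cl_{a_k}$ $G$-converges both to $\cl_a$ and to $\cl_b$ with $a,b\in\mc{A}_{\l,\L}$. Fix an arbitrary $f\in W^{-s,p}(\O)$ and write $(u_k,q_k)=\sol_{a_k}f$, $(u,q)=\sol_{a}f$ and $(\bar u,\bar q)=\sol_{b}f$. By the definition of $G$-convergence the single sequence $\{u_k\}$ satisfies both $u_k\rightharpoonup u$ and $u_k\rightharpoonup \bar u$ weakly in $W^{s,p}_0(\O)$, so uniqueness of the weak limit gives $u=\bar u=:w$.

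Next I would exploit the fact that $w$ is simultaneously the weak solution of~\eqref{eq:vp} with coefficient $a$ and with coefficient $b$. Writing the weak formulation~\eqref{eq:wf} for each of the two coefficients, subtracting, and --- after extending the class of admissible test functions from $C_c^\infty(\O)$ to $W^{s,p}_0(\O)$ by density --- choosing the test function to be $w$ itself, the right-hand sides cancel and we are left with
\[\int\int_{\Rn\times\Rn}\bigl(a(x,y)-b(x,y)\bigr)|D_{s,p}w(x,y)|^p\,dx\,dy=0.\]
The same identity can be obtained without the test function: since by Proposition~\ref{prop:GimpliesH} this $G$-convergence is in fact $H$-convergence, $q_k\rightharpoonup q$ and $q_k\rightharpoonup \bar q$ in $L^{p'}(\Rn\times\Rn)$, whence $q=\bar q$, i.e.\ $(a-b)|D_{s,p}w|^{p-2}D_{s,p}w=0$ a.e., and pairing with $D_{s,p}w$ and integrating yields the displayed equality.

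Finally, since $f$ was arbitrary and $\cl_a$ is a bijection of $W^{s,p}_0(\O)$ onto its dual by Proposition~\ref{prop:existence}, the function $w=u$ ranges over all of $W^{s,p}_0(\O)$ as $f$ ranges over $W^{-s,p}(\O)$. Thus the nonlocal functional $w\mapsto\int\int_{\Rn\times\Rn}(a-b)|D_{s,p}w|^p\,dx\,dy$ vanishes identically on $W^{s,p}_0(\O)$, and~\cite[Proposition~17]{elbau} forces $a=b$ a.e.\ in $\Rn\times\Rn$. I do not anticipate a genuine obstacle here: the substantive ingredient is the cited characterization of null nonlocal functionals, already used in the proof of Proposition~\ref{prop:GimpliesH}, and the only points worth a line each are the density argument that legitimizes taking $w$ as a test function and the observation --- immediate from well-posedness --- that arbitrariness of $f$ translates into arbitrariness of $w$ in $W^{s,p}_0(\O)$.
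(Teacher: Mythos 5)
Your argument is correct and follows essentially the same route as the paper: identify the two putative limit solutions via uniqueness of the weak limit, derive $\int\int_{\Rn\times\Rn}(a-b)|D_{s,p}w|^p\,dx\,dy=0$ from the fact that $w$ solves~\eqref{eq:vp} for both coefficients, note that arbitrariness of $f$ makes $w$ arbitrary in $W^{s,p}_0(\O)$, and invoke \cite[Proposition~17]{elbau}. Your version is in fact slightly more explicit than the paper's (which simply refers back to the proof of Proposition~\ref{prop:GimpliesH}), and correctly observes that no compactness argument is needed here.
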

\begin{proof} The proof follows the lines of the second part of proof of Proposition \ref{prop:GimpliesH}. Let us assume that the sequence $\{a_n\}\subset \mc{A}_{\l,\L}$ $G$-converges to both $a$ and $\tilde a$. Then, arguing as in the proof of Proposition \ref{prop:GimpliesH}, we have that
\[\int\int_{\Rn\times \Rn} (a-\tilde{a})|D_{s,p}u^f|^p\,dx\,dy=0,\]
for any \(f\), where $u^f$ is the solution of the Dirichlet problem \eqref{eq:vp} for both coefficients $a$ and $ \tilde a$. The same argument as above yields the conclusion
\[a=\tilde{a}, \mbox{  a.e. in  }\Rn\times \Rn.\]
\end{proof}

\section{Proof of Theorem~\ref{th:sufficient}}

This section is devoted to the proof Theorem~\ref{th:sufficient}.

\paragraph{We claim that weak-* convergence is sufficient for $H$-convergence}
Indeed, let us assume that \(\{a_k\}_{k=1}^\infty \in \mc{A}_{\l,\L}\) be such that
\(a_k \rightharpoonup a \in \mc{A}_{\l,\L}\), weak-* in \(L^\infty(\Rn\times\Rn)\).
Owing to Theorem~\ref{th:compactness} there exists a subsequence $\{a_{k'}\}$ and $\tilde{a}\in \mc{A}_{\l,\L}$ such that $\cl_{a_{k'}}$ $H$-converges to $\cl_{\tilde{a}}$. Let $f\in W^{-s,p}(\O)$ be fixed but arbitrary, and let $(u_k,q_k)=\sol_{a_k}f$, \((u,q)=\sol_{a}f\), and \((\tilde{u},\tilde{q})=\sol_{\tilde{a}}f\).
Owing to $H$-convergence, $u_{k'}\rightharpoonup \tilde{u}$, weakly in $W^{s,p}_0(\O)$.
Recalling that $u_{k'}=0$ in $\Rn\backslash \O$ and the compact embedding of $W^{s,p}_0(\O)$ into $L^p(\O)$, $\|u_{k'}-\tilde{u}\|_{L^p(\Rn)}\to 0$.
In particular, there is a further subsequence \(u_{k''}\) of \(u_{k'}\), such that
\(u_{k''}(x)\to \tilde{u}(x)\), for almost all \(x \in \Rn\).

Owing to the variational characterization of solutions to~\eqref{eq:wf} given in Proposition~\ref{prop:existence}, we have the inequality \(I_{a_k}(u_k)\le I_{a_k}(u)\), \(\forall\: k=1,2,\dots\)
Taking into account the facts that $|D_{s,p}u|^p\in L^1(\Rn\times \Rn)$ and $a_k\rightharpoonup a$ weak-\(*\) in $L^\infty(\Rn\times \Rn)$, we obtain the inequality
\begin{equation}\label{eq:limsup}
  \limsup_{k''\rightarrow \infty} I_{a_{k''}}(u_{k''})\le \lim_{k''\rightarrow \infty} I_{a_{k''}}(u)=I_{a}(u)\leq I_{a}(\tilde{u}).\end{equation}

On the other hand, let us define the measures
\[\nu_k(E) =\int\int_E a_k(x,y)\,dx\,dy =\int\int_{\Rn\times \Rn} \chi_E(x,y)a_k(x,y)\,dx\,dy,\quad k\ge 1,\]
and
\[\nu(E)=\int\int_E a(x,y)\,dx\,dy =\int\int_{\Rn\times \Rn} \chi_E(x,y)a(x,y)\,dx\,dy,\]
where $E\subset \Rn\times \Rn$ is an arbitrary Lebesgue measurable set. Weak-\(*\) convergence of $a_k$ to $a$ implies the strong convergence of these measures, that is, $\lim_{k\rightarrow \infty}\nu_k(E)=\nu(E)$ for any measurable set $E\subset \R^n\times \Rn$. Since $u_{k''}(x)\to \tilde{u}(x)$ for almost all $x\in \Rn$, it follows that $|D_{s,p}u_{k''}(x,y)|^p\rightarrow |D_{s,p} \tilde{u}(x,y)|^p$, for almost all $(x,y)\in \Rn\times \Rn$.
These facts together with the upper bound~\eqref{eq:limsup} allow us to apply the generalized Fatou's lemma~\cite[Proposition~17, p.~269]{royden} to get the inequality
\begin{equation*}\begin{aligned}
I_{a}(\tilde{u}) &=
\frac{1}{2p}\int\int_{\Rn\times\Rn} |D_{s,p}\tilde{u}(x,y)|^p\,d\nu(x,y) - \langle f,u\rangle \\ &\leq
\liminf_{k''\rightarrow \infty} \frac{1}{2p}\int\int_{\Rn\times\Rn} |D_{s,p}u_{k''}(x,y)|^p\,d\nu_{k''}(x,y) - \langle f,u_{k''}\rangle
=\liminf_{k''\rightarrow \infty} I_{a_{k''}}(u_{k''}).
\end{aligned}\end{equation*}
Therefore \(I_{a}(u)=I_{a}(\tilde{u})\), whence \(u=\tilde{u}\), owing to the uniqueness of solutions to~\eqref{eq:wf} and their variational characterization.
Arguing further as in the proof of Proposition~\ref{prop:GimpliesH}, we conclude that $a=\tilde{a}$, almost everywhere in \(\Rn\times\Rn\).
Finally, since from every subsequence of \(\cl_{a_k}\) we can extract a further subsequence, which \(H\)-converges to  \(\cl_{a}\), the whole sequence must converge to \(\cl_{a}\).

\paragraph{We now claim that weak-* convergence is also necessary for $H$-convergence}
Assume that $\cl_{a_k}$ $H$-converges to $\cl_{a}$, but for some weak-* open
neighbourhood \(N\) of \(a \in \mc{A}_{\l,\L}\) and a subsequence \(k'\) we have
\(a_{k'} \not\in N\).
Since  $\{a_{k'}\}_{k'=1}^\infty \subset \mc{A}_{\l,\L}$ and is thus bounded in \(L^\infty(\Rn\times\Rn)\), it has a non-empty set of weak-* limit points.
Suppose that \(a_{k''} \rightharpoonup \tilde{a}\in \mc{A}_{\l,\L}\) for some further
subsequence \(k'' = 1,2,\dots\)
By the already established implication, \(\cl_{a_{k''}}\) \(H\)-converges to \(\cl_{\tilde{a}}\).
Owing to Propositions~\ref{prop:GimpliesH}, \ref{prop:uniqueness}, and~\cite[Proposition~17]{elbau}
we necessarily have \(\tilde{a}=a\).
But then \(a_{k''} \in N\), for all large enough \(k''\), which is a contradiction.
This completes the proof.

\subsection*{Acknowledgements}
AE's research is financially supported by the Villum Fonden through the Villum Investigator Project InnoTop.  The work of JCB is funded by FEDER EU and Ministerio de Econom{\'i}a y Competitividad (Spain) through grant MTM2017-83740-P.


\end{document}